\newcommand{\cali}{{\mathcal I}}
\newcommand{\calj}{{\mathcal J}}
\newcommand{\beq}{\begin{eqnarray*}}
    \newcommand{\feq}{\end{eqnarray*}}
\newcommand{\beqn}{\begin{eqnarray}}
\newcommand{\feqn}{\end{eqnarray}}
\newtheorem{theorem}{Theorem}
\makeatletter \@addtoreset{theorem}{section}\makeatother
\newtheorem{definition}[theorem]{Definition}
\newtheorem{lemma}[theorem]{Lemma}
\newtheorem*{theorem*}{Theorem}
\newtheorem*{conj*}{Conjecture}
\newtheorem{proposition}[theorem]{Proposition}
\def\BState{\State\hskip-\ALG@thistlm}
\newlength\myindent
\DeclareMathOperator{\Var}{Var}
\DeclareMathOperator{\I}{\textit{I}_{n}}
\DeclareMathOperator{\Lis}{\textit{L}}
\title{The longest increasing subsequence in involutions avoiding 3412 and another pattern}
\author{Toufik~Mansour\thanks{ Department of Mathematics, University of Haifa, 199 Abba Khoushy Ave, 3498838 Haifa, Israel;
\newline e-mail: tmansour@univ.haifa.ac.il} \and
Reza~Rastegar\thanks{Occidental Petroleum Corporation, Houston, TX 77046 and Departments of Mathematics and Engineering, University of Tulsa, OK 74104, USA - Adjunct Professor; e-mail:  reza\_rastegar2@oxy.com}
\and
Alexander~Roitershtein \thanks{Department of Statistics, Texas A\&M University, College Station, TX 77843, USA; \newline e-mail: alexander@stat.tamu.edu}
\and
G\"{o}khan Y\i ld\i r\i m\thanks{Department of Mathematics, Bilkent University, 06800 Ankara, Turkey;\newline e-mail: gokhan.yildirim@bilkent.edu.tr.
Corresponding author.}
}
\begin{document}
\maketitle

\begin{abstract}
In this note, we study the mean length of the longest increasing subsequence of a uniformly sampled involution that avoids the pattern $3412$ and another pattern.
\end{abstract}
\noindent{\em Keywords}: Pattern avoidance, restricted involutions,  longest increasing subsequence, Ulam's problem, Motzkin path,
generating functions.\\
{\em MSC2010: } Primary~05A05, 05A16; Secondary~  05A15.
\section{Introduction}
In this paper we study the longest increasing subsequence of involutions avoiding 3412 and another pattern. A permutation $\sigma=\sigma_1\sigma_2\cdots\sigma_n$ of length $n$ is defined as an  arrangement of the elements of the set
$[n]:=\{1,2,\cdots, n\}.$ A permutation $\sigma$ is called an \textit{involution} if $\sigma=\sigma^{-1},$ where $\sigma^{-1}_i=j$ if and only if $\sigma_j=i$. We use notations $S_n$ and $I_n$ to denote, respectively, the set of all permutations and the set of all involutions of length $n.$
A subsequence of $\sigma\in S_n$ is defined as a sequence $\sigma_{i_1}\sigma_{i_2}\cdots \sigma_{i_k}$ where $1\leq i_1<i_2<\cdots < i_k \leq n.$ The subsequence is called an increasing subsequence if $\sigma_{i_1} < \sigma_{i_2} < \cdots < \sigma_{i_k}.$
\par
For any permutation $\sigma,$ there is at least one longest increasing subsequence. We denote the length of this subsequence by $\Lis_n(\sigma).$
The celebrated Ulam's problem is concerned with the asymptotic behavior, as $n$ tends to infinity, of the expectation of $\Lis_n(\sigma)$ when $\sigma$ is chosen uniformly from $S_n$ \cite{Baik, Romik}. The classical Ulam's problem has been extended and generalized in various directions
\cite{Stanley, Stanleya}. In particular, asymptotic behavior of the distribution of the longest increasing subsequence of random involutions is the topic of \cite{Baik2, Kiwi}. 
\par
Variations of Ulam's problem have been considered also for permutations in $S_n$ avoiding certain patterns \cite{Deutsch1, mg, mg2, Reifegerste1}. For permutations $\pi=\pi_1\pi_2\cdots\pi_k\in S_k$ and $\sigma=\sigma_1\sigma_2\cdots\sigma_n\in S_n$, we say that
$\sigma$ contains \textit{pattern} $\pi$ if there exist $1\leq i_1<i_2<\cdots<i_k\leq n$ such that
\beq
\sigma_{i_s}<\sigma_{i_t} \quad \text{if and only if} \quad \pi_s<\pi_t \quad \mbox{for all} \quad 1\leq s,t\leq k.
\feq
For instance, the permutation $15243$ contains
$321$ as a pattern because it has the subsequences $*5*43,$ and $543$ matches the pattern $321.$
If $\sigma$ does not contain $\pi$ as a pattern, then we say that $\sigma$
{\it avoids} $\pi$ or $\sigma$ is a $\pi$-\textit{avoiding}
permutation. We denote by $S_n(\pi)$ and $\I(\pi),$ respectively, the sets of $\pi$-avoiding permutations and $\pi$-avoiding involutions of $[n].$ For more on the recent enumerative results on pattern-avoiding involutions, see \cite{BHV} and references therein, and for more on pattern-restricted permutation classes, see \cite{Vat}. 
\par
The goal of this paper is to study Ulam's problem in the context of involutions in $\I$ avoiding $3412$ and another pattern.
In \cite{Egge} Egge connected generating functions for various subsets of $\I(3412)$ with continued fractions and Chebyshev polynomials of the second kind, and gave a recursive formula for computing them. The formula exploits a bijection between $\I(3412)$ and Motzkin paths established in \cite{G}. Many of the results in \cite{Egge} are concerned with statistics of decreasing subsequences of involutions in $\I(3412).$ Later, Egge and Mansour \cite{EggMan} extended the results in \cite{Egge} to certain bivariate generating functions involving statistics of two-cycles in involutions. In this paper we extend
the method of \cite{Egge, EggMan} to certain bivariate generating functions involving the statistic $\Lis_n(\sigma),$ and use it as a tool for studying the Ulam's problem for such pattern-restricted involutions.  
\par
For a given set of patterns $T,$ let $\I(T)=\bigcap_{\tau\in T}\I(\tau)$ and denote by  $P_{n,T}$ the uniform distribution on $\I(T).$ Thus, the probability of choosing any $\sigma\in \I(T)$  under $P_{n,T}$ is $\frac{1}{|\I(T)|},$ where $|\,\cdot\,|$ is the size of the set. We use the notations $E_{n,T}(\,\cdot\,)$ and $\mbox{Var}_{n,T}(\,\cdot\,)$ to denote, respectively, the expectation and the variance operators under $P_{n,T}$.
We use the shortcut $\Lis_n$ to denote the random variable $\Lis_n(\sigma),$ where $\sigma\in S_n$ is a random permutation sampled uniformly from $I_n(T).$
\par
Throughout the paper, we write $a_n\sim b_n$  to indicate that $\lim_{n\to\infty}\frac{a_n}{b_n}=1.$ We have:

\begin{theorem}
\label{mn-thm1} Consider $\Lis_n$ on $\I(T)$ under the uniform probability measure. Then we have the following:
$\mbox{}$
\begin{enumerate}[label=(\roman*)]
\item If $T=\{3412\},$ then $E_{n,T}(\Lis_n)=\frac{4n}{9}.$
\item If $T=\{3412, 123\},$ then $E_{n,T} (\Lis_n)=\frac{n^2/2+3/4+(-1)^n/4}{n^2/4+7/8+(-1)^n/8}\sim2.$
\item If $T=\{3412, 213\}$ or $T=\{3412, 132\},$ then $E_{n,T}  (\Lis_n)\sim\frac{n}{\sqrt{5}}.$
\item If $T=\{3412, 321\},$ then $E_{n,T} (\Lis_n)\sim\frac{3+\sqrt{5}}{5+\sqrt{5}}n.$
\item If $T=\{3412, 123\cdots k\}$ for some $k\geq 1,$ then $E_{n,T} (\Lis_n) \sim k-1.$
\item If $T=\{3412, 4123\}$, then,
\beq
E_{n,T}(\Lis_n)\sim\frac{1}{457}(198\alpha^3-246\alpha^2-131\alpha+299)n\approx0.454689799955\cdots n.
\feq
Here $\alpha$ is the complex root of smallest absolute value of the polynomial $3x^4-3x^3-x^2+3x-1.$
\item If $T=\{3412, 4321\},$ then $E_{n,T} (\Lis_n)\sim\frac{5n}{8}$.
\end{enumerate}
\end{theorem}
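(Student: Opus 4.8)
The plan is to handle part (vii) by the same route used for parts (i)--(vi): transport the problem to Motzkin paths, refine the bijection so that $\Lis_n$ becomes an explicit path statistic, set up a bivariate generating function, and finish with singularity analysis. Concretely, I would first recall the bijection $\varphi\colon \I(3412)\to\mathcal{M}$ of \cite{G} (as used in \cite{Egge,EggMan}) between $3412$-avoiding involutions of $[n]$ and Motzkin paths of length $n$, under which fixed points of $\sigma$ correspond to level steps and the two-cycles of $\sigma$ to matched up/down step pairs. The next task is to describe the sub-family $\mathcal{M}^{(7)}\subseteq\mathcal{M}$ that is the $\varphi$-image of $\I(3412,4321)$: since $\sigma$ avoids $4321$ exactly when its longest decreasing subsequence has length at most $3$, and since for an involution the RSK insertion and recording tableaux coincide, this is the condition that the RSK shape of $\sigma$ has at most three rows; one then checks that, together with the $3412$-condition already built into $\mathcal{M}$, this becomes a simple height/first-passage restriction on the Motzkin path. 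The preliminary lemma I would isolate is thus a clean formula $\Lis_n(\sigma)=s\bigl(\varphi(\sigma)\bigr)$ expressing the longest increasing subsequence as an additive statistic on $\mathcal{M}^{(7)}$ (a combination of the number of up-steps and the number of level steps at height $0$, in the spirit of the computations behind parts (i)--(iv)).

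With that dictionary in hand, I would introduce
\[
F(x,q)\;=\;\sum_{n\ge 0}\ \sum_{\sigma\in \I(3412,4321)} q^{\,\Lis_n(\sigma)}\,x^{n}
\]
and derive an algebraic equation for $F$ by the first-return (continued-fraction) decomposition of the paths in $\mathcal{M}^{(7)}$, as in \cite{Egge,EggMan}, carrying along the extra catalytic bookkeeping needed to record the contribution of $s(\cdot)$ to the exponent of $q$. Specializing gives the ordinary generating function $F(x,1)=\sum_n|\I(3412,4321)|\,x^n$ and, writing $G(x)=\partial_q F(x,q)\big|_{q=1}=\sum_n\bigl(\sum_{\sigma}\Lis_n(\sigma)\bigr)x^n$, the exact identity
\[
E_{n,T}(\Lis_n)\;=\;\frac{[x^n]\,G(x)}{[x^n]\,F(x,1)}.
\]

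For the asymptotics I would use that the algebraic equation for $q\mapsto F(\cdot,q)$ has a square-root-type dominant singularity located at a point $x=\rho(q)$ that varies analytically with $q$ near $q=1$ (the relevant branch point of the discriminant). Standard transfer theorems then give $[x^n]F(x,1)\sim c_1\,\rho(1)^{-n}n^{-3/2}$, while differentiating in $q$ produces an extra simple pole against the square root and hence $[x^n]G(x)\sim c_2\,\rho(1)^{-n}n^{-1/2}$; equivalently, by the moving-singularity (quasi-powers) argument,
\[
E_{n,T}(\Lis_n)\ \sim\ -\,\frac{\rho'(1)}{\rho(1)}\,n .
\]
Solving the algebraic equation for $\rho(q)$ and evaluating $-\rho'(1)/\rho(1)$ should return $\tfrac58$.

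The main obstacle is the first step. Tracking a longest increasing subsequence is not \emph{a priori} a local operation, so the real content is to prove that for involutions in $\I(3412,4321)$ it nonetheless collapses to the additive statistic $s(\varphi(\sigma))$ --- this is where both the $3412$- and the $4321$-avoidance must be used together, and getting the precise form of $s$ (including the boundary terms at height $0$) right is what then forces the correct catalytic variable in the generating-function derivation. Once the algebraic equation for $F(x,q)$ is secured, the extraction of $[x^n]F(x,1)$ and $[x^n]G(x)$ and the arithmetic yielding $5/8$ are routine.
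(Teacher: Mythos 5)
Your overall strategy (transfer to Motzkin paths via Guibert's bijection, first-return/continued-fraction decomposition, moving-singularity asymptotics) is viable and is essentially the path-side mirror of what the paper does, but as written there is a genuine gap exactly at the step you yourself flag as the main obstacle: the lemma expressing $\Lis_n$ as a path statistic is neither proved nor correctly stated. Under the first-return correspondence ($\sigma=1\oplus\sigma'$ gives a level step, $\sigma=(1\ominus\sigma''\ominus1)\oplus\sigma'$ gives an up-step, the path of $\sigma''$, a down-step, then the path of $\sigma'$), the statistic obeys $\Lis(1\oplus\sigma')=1+\Lis(\sigma')$ and $\Lis((1\ominus\sigma''\ominus1)\oplus\sigma')=\max(\Lis(\sigma''),1)+\Lis(\sigma')$; on paths this is the number of level steps (at all heights) plus the number of peaks (consecutive $UD$ pairs), not ``up-steps plus level steps at height $0$.'' A counterexample inside the very class you treat: $\sigma=4231\in \I(3412,4321)$ corresponds to the path $ULLD$ and has $\Lis_n(\sigma)=2$, while your proposed statistic gives $1$ (similarly $4321\leftrightarrow UUDD$ gives $2$ versus the true value $1$). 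The paper avoids needing any explicit path statistic: it runs the same decomposition directly on involutions (Proposition~\ref{prop:permmap}), feeds the two recursions above into the generating function, and its Proposition~\ref{mth}(iii) applied to $\tau=k(k-1)\cdots1$ yields $F_k(x,q)=1/(1-qx-(q-1)x^2-x^2F_{k-2}(x,q))$, hence the Chebyshev/continued-fraction form and, for $k=4$, $H_{4321}(x,q)=\frac{1-qx}{q(q-1)x^3+q(q-1)x^2-2qx+1}$. Your route would reach the same function once the statistic is fixed, since the $4321$-avoidance does indeed amount to bounding the path height at $1$ (equivalently truncating the continued fraction).

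Two further inaccuracies: the asymptotic scenario you describe is not the one occurring in part (vii). Because the paths are height-bounded, $H_{4321}(x,q)$ is rational; at $q=1$ it is $\frac{1-x}{1-2x}$, with a moving \emph{simple pole} $\rho(q)$, $\rho(1)=\tfrac12$, not a square-root branch point, so the $n^{-3/2}$ and $n^{-1/2}$ factors are wrong (the square-root behaviour belongs to the unrestricted class of part (i)). The quasi-powers conclusion $E_{n,T}(\Lis_n)\sim-\frac{\rho'(1)}{\rho(1)}\,n$ does survive for a moving simple pole and indeed evaluates to $\tfrac58$, so the final formula is salvageable. Lastly, the theorem has seven parts and your write-up only addresses (vii), with the others deferred to ``the same route''; parts such as (v) and (vi) require additional work (a recursion in $k$ and its solution), and part (i) requires the genuine square-root analysis. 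As it stands, the proposal is a plausible plan whose central lemma is unproven and mis-specified, so it does not yet constitute a proof.
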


Since $3412$ contains the patterns $231$ and $312$, we have
\beq
I_n(3412,231)=I_n(231) \qquad \mbox{\rm and} \qquad I_n(3412,312)=I_n(312).
\feq
As shown in section~3.2 of \cite{mg}, $E_{n,T}(\Lis_n)=\frac{n+1}{2}$
for $T=\{3412,231\}$ and $T=\{3412,312\}$. Thus, Theorem~\ref{mn-thm1} covers all possible cases for $\I(3412, \tau)$ with $\tau\in S_3$.

Using similar arguments we also obtained the asymptotic of $E_{n,T}(\Lis_n)$ and $\Var_{n,T}(\Lis_n)$ for all possible cases $\I(3412, \tau)$ with $\tau\in S_4$. We summarize these results in Table~\ref{tab:summary}, without explicit calculations for the sake of space.

\begin{table}[!t]
\centering
	\resizebox{\textwidth}{!}{%
		{\small\begin{tabular}{|l|l|l|}\hline
		&&\\
				$\tau$ & $H_\tau(x,q)=\sum_{n\geq0}\sum_{\sigma\in I_n(3412, \tau)}x^nq^{\Lis_n(\sigma)}$ &$E_{n, T}=E_{n, T}(\Lis_n),V_{n,T}=\Var_{n,T}(\Lis_n)$ for $T=\{3412,\tau\}$ \\[4pt]\hline\hline
				&&\\
				$1234$&$1+\frac{x}{(1-x)}q+\frac{x^2}{(1-x)^3(1+x)}q^2+\frac{x^3(x^2+1)}{(1-x)^5(x+1)^2}q^3$&$E_{n,T}\sim3$, $V_{n,T}\sim\frac{12}{n^2}$\\[4pt]\hline
				&&\\
				$1243,2134,1324$&$1+\frac{qx(x^4+(1+(q-2)x^2)(1-xq))}{(1-qx-x^2)^2(1-x)}$&$E_{n,T}\sim\frac{n}{\sqrt{5}}$, $V_{n,T}\sim\frac{4}{5\sqrt{5}}n$\\[4pt]\hline
				&&\\
				$1342,1423$&$\frac{(q-1)x^3+x^2+x-1}{x^3-x^2-(1+q)x+1}$&$E_{n,T}\sim\frac{(3-2\alpha)(\alpha+1)}{7}n$, $V_{n,T}\sim\frac{-7\alpha^2+5\alpha+10}{49}n$, where\\
				$2314,3124$&&$\alpha^3-\alpha^2-2\alpha+1=0,$ $\alpha\approx 0.44504$\\[4pt]\hline
				&&\\
				$1432,3214,2143,4231$&$\frac{1-x}{1-x-qx}$&$E_{n,T}\sim\frac{n}{2}$, $V_{n,T}\sim\frac{1}{4}n$\\[4pt]\hline
				&&\\
				$2341,4123$&$\frac{1}{1-\frac{xq}{1-x}-\frac{x^4q^2}{(1-x)^3(1+x)}}$&$E_{n,T}\sim\frac{198\alpha^3-246\alpha^2-131\alpha+299}{457}n$,\\
				&&$V_{n,T}\sim\frac{28800\alpha^3-7157\alpha^2-8959\alpha+47230}{208849}n$, where\\
				&&$3\alpha^4-3\alpha^3-\alpha^2+3\alpha-1=0,$ $\alpha\approx 0.45209$ \\[4pt]\hline
				&&\\
				$2413,3142$&$\frac{1-xq-x^2(q-1)-\sqrt{(1-xq-x^2(q-1))^2-4x^2}}{2x^2}$&$E_{n,T}\sim\frac{4n}{9}$, $V_{n,T}\sim\frac{4n}{27}$\\[4pt]\hline
				&&\\
				$2431,3241$&$\frac{1-qx-x^2}{q^2x^3+(q^2-q-1)x^2-2qx+1}$&$E_{n,T}\sim\frac{(\alpha+1)(\alpha+2)}{7}n$, $V_{n,T}\sim\frac{-7\alpha^2-4\alpha+13}{49}n$ where\\
				$4132,4213$&&$\alpha^3-\alpha^2-2\alpha+1=0,$ $\alpha\approx 0.44504$\\[4pt]\hline
				&&\\
				$3421,4312$&$\frac{1-(q+1)x}{(1-(q+1)x-qx^2)(1-qx)}$&$E_{n,T}\sim\frac{n}{2}$ $V_{n,T}\sim\frac{\sqrt{2}}{8}n$\\[4pt]\hline
				&&\\
				$4321$&$\frac{1-qx}{q(q-1)x^3+q(q-1)x^2-2qx+1}$&$E_{n,T}\sim\frac{5}{8}n$, $V_{n,T}\sim\frac{7}{64}n$\\[4pt]\hline
			
			\end{tabular}}}
			\caption{\label{tab:summary} The list of the generating functions and asymptotic values of the mean and variance of the length of the longest increasing subsequence for uniformly random involutions from $\I(3412,\tau)$ with $\tau \in S_4$.}
		\end{table}

\par
The rest of the paper is organized as follows. In Section~\ref{tzero} we consider $\I(3412)$ and prove part (i) of Theorem~\ref{mn-thm1}.
In Section~\ref{taus} we consider $\I(3412,\tau)$ with various patterns $\tau$ and prove the rest of Theorem~\ref{mn-thm1}.
\section{Longest increasing subsequences in $\I(3412)$}
\label{tzero}
For $\rho \in S_k$ and $\sigma \in S_m,$ we denote by $\rho \oplus \sigma$ their {\em direct sum}, which is a permutation in $S_{k+m}$ given by $\rho_1\cdots\rho_k(\sigma_1+k)\cdots(\sigma_m+k)$. Similarly, we denote by $\rho \ominus \sigma$ the {\em skew sum} of $\rho$ and $\sigma$, which is an element of $S_{k+m}$ given by $(\rho_1+m)\cdots(\rho_k+m)\sigma_1\cdots\sigma_m$.
\par
Our proofs make use of the following recursive structure of the involutions in $\I(3412)$, for the details see \cite[Remark~4.28]{G} and \cite[Proposition~2.9]{Egge}:
\begin{proposition}
\label{prop:permmap}
Let $\rho\in \I(3412)$. Then either
\begin{itemize}
\item[(i)] $\rho=1\oplus\rho'$ and $\rho'\in \mbox{I}_{n-1}(3412)$, or
\item[(ii)] $\rho=(1\ominus\rho''\ominus1)\oplus\rho'$, where $\rho''\in \mbox{I}_{m-2}(3412)$ and $\rho'\in \mbox{I}_{n-m}(3412)$
for some $m\geq2.$
\end{itemize}
\end{proposition}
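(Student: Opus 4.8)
The plan is to read the decomposition off directly from the value $m := \rho_1 = \rho(1)$, using that $\rho$ is an involution, so $\rho(\rho(1)) = 1$ and hence $\rho_m = 1$ in every case. I would split according to whether $m = 1$ or $m \ge 2$.

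If $m = 1$, then $1$ is a fixed point, the set $\{2, \dots, n\}$ is invariant under $\rho$, and the restriction of $\rho$ to this set, relabeled order-preservingly onto $\{1, \dots, n-1\}$, is an involution $\rho'$; it avoids $3412$ because every pattern occurring in $\rho'$ already occurs in $\rho$. This gives $\rho = 1 \oplus \rho'$ with $\rho' \in I_{n-1}(3412)$, which is alternative (i).

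The substantive case is $m \ge 2$, and the only step that is not bookkeeping is to show that the block $\{1, \dots, m\}$ is invariant under $\rho$, i.e. that $\rho(\{2, \dots, m-1\}) = \{2, \dots, m-1\}$. For $i \in \{2, \dots, m-1\}$, injectivity forces $\rho_i \notin \{1, m\}$ (those two values occur at positions $m$ and $1$), so it remains only to exclude $\rho_i > m$. If $\rho_i = j > m$ for some such $i$, then $\rho_j = i$ as well, and the four positions $1 < i < m < j$ carry the values $m,\, j,\, 1,\, i$; since $1 < i < m < j$, these values stand in the relative order $3,4,1,2$, so $\rho$ contains the pattern $3412$ --- a contradiction. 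Hence $\{1, \dots, m\}$, and therefore its complement $\{m+1, \dots, n\}$, are both $\rho$-invariant.

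To conclude, I would restrict $\rho$ to $\{2, \dots, m-1\}$ and to $\{m+1, \dots, n\}$, relabel these order-preservingly onto $\{1, \dots, m-2\}$ and $\{1, \dots, n-m\}$, and call the results $\rho''$ and $\rho'$. Each is an involution because its index set is $\rho$-invariant, and each avoids $3412$ as a sub-pattern of $\rho$; when $m = 2$, $\rho''$ is the empty involution in $I_0(3412)$. Combining $\rho_1 = m$, $\rho_m = 1$ and the two invariances, and unwinding the definitions of $\oplus$ and $\ominus$, yields $\rho = (1 \ominus \rho'' \ominus 1) \oplus \rho'$ with $\rho'' \in I_{m-2}(3412)$ and $\rho' \in I_{n-m}(3412)$, which is alternative (ii). I expect the pattern-forcing argument for the invariance of the initial block to be the crux; the remainder is the routine observation that restrictions of an involution to invariant index sets are involutions and that $3412$-avoidance is hereditary.
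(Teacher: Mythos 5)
Your proof is correct and complete. Note that the paper itself does not prove Proposition~\ref{prop:permmap}; it simply cites \cite[Remark~4.28]{G} and \cite[Proposition~2.9]{Egge} (where the structure is tied to the bijection between $3412$-avoiding involutions and Motzkin paths), so there is no in-paper argument to compare against. Your self-contained derivation is the standard direct one, and you have correctly identified the crux: with $m=\rho_1$ and hence $\rho_m=1$, the only nontrivial point is that no $i\in\{2,\dots,m-1\}$ can satisfy $\rho_i=j>m$, and your use of the involution property to get $\rho_j=i$ and then read off the occurrence $m,\,j,\,1,\,i$ of $3412$ at positions $1<i<m<j$ is exactly the step where both hypotheses (involution and $3412$-avoidance) are needed. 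The remaining bookkeeping (invariance of the complementary block, heredity of pattern avoidance, the empty $\rho''$ when $m=2$, and unwinding $\oplus$, $\ominus$) is handled correctly.
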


\begin{proof}[Proof of Theorem~\ref{mn-thm1}-(i)]
Let $H(x,q)$ be the generating function for the number of
involutions in $\I(3412)$ according to the length of the longest increasing subsequence. More precisely,
\beqn
\label{h}
H(x,q)=\sum_{n\geq0}\sum_{\sigma\in
I_n(3412)}x^nq^{\Lis_n(\sigma)}.
\feqn
To obtain a closed form for $H(x,q),$ we partition $\I(3412)$ as a union of the following four non-overlapping subsets, by virtue of Proposition~\ref{prop:permmap}:
\begin{enumerate}[nosep,label=(\roman*)]
\item $\cali_{n,1}$ - the set of the empty involution;
\item $\cali_{n,2}$ - the set of the involutions in $\I(3412)$ that start with $1;$
\item $\cali_{n,3}$ - the set of the involutions in $\I(3412)$ that start with $21;$
\item $\cali_{n,4}$ - the set of the involutions in $\I(3412)$ that can be written as $(1\ominus\sigma''\ominus 1)\oplus\sigma'$, where $\sigma''$ is a nonempty $3412$-avoiding involution and $\sigma'$ is any $3412$-avoiding involution.
\end{enumerate}
Adding together contributions of all the four sets, we obtain:
\beq
H(x,q)=
\underbrace{1}_{\cali_{n,1}}
+\underbrace{xqH(x,q)}_{\cali_{n,2}}
+\underbrace{x^2qH(x,q)}_{\cali_{n,3}}
+\underbrace{x^2(H(x,q)-1)H(x,q)}_{\cali_{n,4}}.
\feq
Hence,
\beq
H(x,q)=\frac{1-xq-x^2(q-1)-\sqrt{(1-xq-x^2(q-1))^2-4x^2}}{2x^2}.
\feq
Note that $H(x,1)=\frac{1-x-\sqrt{1-2x-3x^2}}{2x^2}$, which is the generating function for Motzkin numbers
\cite{Egge, G}. Furthermore,
\beq
\frac{\partial}{\partial q}H(x,q)\,\Big|_{q=1}
=-\frac{x+1}{2x}+\frac{1+x^2}{2x\sqrt{1-2x-3x^2}}.
\feq
Hence,
\beq
E_{n,3412}(\Lis_n)=\frac{[x^n]\frac{\partial}{\partial
q}H(x,q)\mid_{q=1}}{[x^n]H(x,1)}\sim\frac{\frac{2n\sqrt{3}}{9\sqrt{\pi
n}n}3^{n+1}}{\frac{\sqrt{3}}{2\sqrt{\pi
n}n}3^{n+1}}=\frac{4n}{9},
\feq
which completes the proof of Theorem~\ref{mn-thm1}-(i).
\end{proof}
\section{Longest increasing subsequences in $\I(3412, \tau)$}
\label{taus}
In this section, we extend our arguments from $\I(3412)$ to $\I(3412, \tau)$ for various patterns $\tau.$ Toward this end, similar to \eqref{h}, we define
\beq
H_\tau(x,q)=\sum_{n\geq0}\,\sum_{\sigma\in I_n(3412, \tau)}x^nq^{\Lis_n(\sigma)}.
\feq
More generally, for a collection of patterns $T$, we set
\beq
H_T(x,q)=\sum_{n\geq0}\,\,\sum_{\sigma\in I_n(3412)\bigcap I_n(T)}x^nq^{\Lis_n(\sigma)}.
\feq
When $T=\{\tau,\tau'\}$, for simplicity, we write $H_{\tau,\tau'}(x,q)$. We also set $H_\emptyset(x,q):=0$ and let $H_{\tau\slash\tau'}(x,q):=H_\tau(x,q)-H_{\tau,\tau'}(x,q)$ denote the corresponding generating function for the involutions in $I_n(3412,\tau)$ that contain the pattern $\tau'$.

We call a permutation {\it irreducible} if it cannot be represented as a direct sum of two nonempty permutations. It is easy to show that every permutation $\rho$ can be written as a direct sum
\beq
\rho=\rho^{(1)}\oplus\rho^{(2)}\oplus\cdots\oplus\rho^{(k)},
\feq
where $\rho^{(1)}, \ldots, \rho^{(k)}$ are nonempty irreducible permutations, uniquely determined by $\rho.$ We next introduce a \textit{bar operator} for permutations following \cite{Egge}.
\begin{definition}
For $\rho\in S_m,$ define $\overline{\rho}$ as follows:
\begin{enumerate}
\item
$\overline{\emptyset} = \emptyset$ and $\overline{1} = \emptyset$.
\item
If $m\ge 2$ and there exists a permutation $\sigma$ such that $\rho = 1 \ominus \sigma \ominus 1,$ then $\overline{\rho} = \sigma$.
\item
If $m\ge 2$ and there exists a permutation $\sigma$ such that $\rho = 1 \ominus \sigma$, and $\sigma$ does not end with 1, then $\overline{\rho} = \sigma$.
\item
If $m\ge 2$ and there exists a permutation $\sigma$ such that $\rho = \sigma \ominus 1$, and $\rho$ does not begin with $m,$ then $\overline{\rho} = \sigma$.
\item
If $m\ge 2$ and $\rho$ does not begin with $m$, and it does not end with 1, then $\overline{\rho} = \rho$.
\end{enumerate}
\end{definition}
Our main technical tool for calculating the corresponding generating functions for the classes $\I(3412,\tau)$ is the following extension of a result for $\I(3412)$ given by Corollary~5.6 in
\cite{Egge}.
\begin{proposition}
\label{mth}
Suppose that $\tau=\tau^{(1)}\oplus\tau^{(2)}\oplus\cdots\oplus\tau^{(s)}$ is a direct sum of nonempty irreducible permutations $\tau^{(1)}, \ldots, \tau^{(s)}$ such that $\tau^{(1)}$ is not a decreasing sequence. For $i\in [s],$ define
$$\theta^{(i)}:=\overline{\tau^{(1)}\oplus\cdots\oplus\tau^{(i)}}\qquad  \mbox{ and } \qquad \theta^{<i>}:=\tau^{(i)}\oplus\cdots\oplus\tau^{(s)}.$$ Then we have:
\begin{itemize}
\item[(i)] If $\tau^{(1)}=1,$ then
\begin{align*}
H_\tau(x,q)&=1+\frac{xq}{1-x}H_{\theta^{(<2>)}}(x,q)\\
&+x^2\sum_{i=2}^s\big\{H_{\theta^{(i)}\slash12}(x,q)-H_{\theta^{(i-1)}\slash12}(x,q)\big\}
H_{\theta^{<i>}}(x,q).
\end{align*}
\item[(ii)] If $\tau^{(1)}=21,$ then
\begin{align*}
H_\tau(x,q)&=1+xqH_{\rho}(x,q)+\frac{x^2q}{1-x}H_{\theta^{<2>}}(x,q)\\
&+x^2\sum_{i=2}^s\big\{H_{\theta^{(i)}\slash12}(x,q)-\delta_{i>2}H_{\theta^{(i-1)}\slash12}(x,q)\big\}
H_{\theta^{<i>}}(x,q),
\end{align*}
where $\delta_{A}$ is one if $A$ is true, and is zero otherwise.
\item[(iii)] If $\tau^{(1)}=m(m-1)\cdots1$ with $m\geq3,$ then
\begin{align*}
H_\tau(x,q)&=1+(x+x^2+\cdots+x^{m-1})qH_{\rho}(x,q)+\frac{x^mq}{1-x}H_{\theta^{<2>}}(x,q)\\
&+x^2\sum_{i=1}^s\big\{H_{\theta^{(i)}\slash 12}(x,q)-H_{\theta^{(i-1)}\slash 12}(x,q)\big\}
H_{\theta^{<i>}}(x,q).
\end{align*}
\item[(iv)] If $\tau^{(1)}\neq m(m-1)\cdots1$ and $\rho^{(1)}\in S_m$ with $m\geq3,$ then
\begin{align*}
H_\tau(x,q)&=1+\frac{xq}{1-x}H_{\rho}(x,q)\\
&+x^2\sum_{i=1}^s\big\{H_{\theta^{(i)}\slash 12}(x,q)-H_{\theta^{(i-1)}\slash 12}(x,q)\big\}
H_{\theta^{<i>}}(x,q).
\end{align*}
\end{itemize}
\end{proposition}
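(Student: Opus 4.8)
The plan is to run the recursion of \cite[Corollary~5.6]{Egge} while carrying the extra variable $q$ that records $\Lis$, so that setting $q=1$ recovers Egge's identities. Iterating Proposition~\ref{prop:permmap}, I would write a generic $\sigma\in\I(3412)$ as a direct sum $\sigma=\sigma^{(1)}\oplus\cdots\oplus\sigma^{(t)}$ of nonempty sum-indecomposable blocks; each block is then forced to be either a fixed point $1$ or a skew block $1\ominus\rho\ominus1$ with $\rho\in\I(3412)$ (an empty $\rho$ giving the block $21$). Two elementary facts govern the $q$-weight: $\Lis$ is additive under $\oplus$, so $\Lis(\sigma)=\sum_j\Lis(\sigma^{(j)})$; and $\Lis(1\ominus\rho\ominus1)=\max\{1,\Lis(\rho)\}$, where the maximum equals $1$ exactly when $\rho$ is empty or decreasing --- equivalently, when $1\ominus\rho\ominus1$ is itself a decreasing block --- and equals $\Lis(\rho)$ otherwise. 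This dichotomy inside a skew block is precisely what forces the refined series $H_{\theta^{(i)}\slash 12}$, the generating function for $\theta^{(i)}$-avoiding involutions that contain $12$ (equivalently, have $\Lis\ge2$), to appear: among the admissible middles $\rho$ of a skew block one must separate the decreasing ones (weight $q^1$) from the rest (weight $q^{\Lis(\rho)}$), and $H_{\theta^{(i)}}-H_{\theta^{(i)}\slash 12}$ isolates the former.

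With this setup I would condition on the first block $\sigma^{(1)}$ of $\sigma\in\I(3412,\tau)$, which produces the four cases according to whether $\tau^{(1)}$ is a fixed point, equals $21$, is a longer decreasing block, or is sum-indecomposable with an ascent. The combinatorial heart is the containment analysis. Since $\tau=\tau^{(1)}\oplus\cdots\oplus\tau^{(s)}$ and each $\tau^{(i)}$ is sum-indecomposable, any occurrence of $\tau$ in the direct sum $\sigma$ places a (possibly empty) consecutive prefix $\tau^{(1)}\oplus\cdots\oplus\tau^{(i-1)}$ inside $\sigma^{(1)}$ and the complementary suffix $\theta^{<i>}=\tau^{(i)}\oplus\cdots\oplus\tau^{(s)}$ inside $\sigma^{(2)}\oplus\cdots\oplus\sigma^{(t)}$; demanding the prefix be as long as possible makes $i$ unique, and the telescoping difference $H_{\theta^{(i)}\slash 12}-H_{\theta^{(i-1)}\slash 12}$ then records exactly the middles that realize $\tau^{(1)}\oplus\cdots\oplus\tau^{(i-1)}$ but not $\tau^{(1)}\oplus\cdots\oplus\tau^{(i)}$. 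To decide when a prefix $\tau^{(1)}\oplus\cdots\oplus\tau^{(i)}$ fits inside a skew block $\sigma^{(1)}=1\ominus\rho\ominus1$, I would reuse the bar operator: the forced leading maximum and trailing minimum of $1\ominus\rho\ominus1$ can only match the ``outer'' entries of the prefix, and stripping those off --- which is exactly the content of the five clauses defining $\overline{\;\cdot\;}$ --- reduces the question to whether $\rho$ contains $\theta^{(i)}=\overline{\tau^{(1)}\oplus\cdots\oplus\tau^{(i)}}$. Assembling this, $\sigma\in\I(3412,\tau)$ iff the middle of $\sigma^{(1)}$ avoids the appropriate $\theta^{(i)}$ and the tail $\sigma^{(2)}\oplus\cdots\oplus\sigma^{(t)}$ avoids the appropriate $\theta^{<i>}$; weighting each $\sigma^{(1)}$ by $x^{|\sigma^{(1)}|}q^{\Lis(\sigma^{(1)})}$, evaluated via the decreasing/non-decreasing split above, and multiplying by $H_{\theta^{<i>}}(x,q)$ yields the four displayed functional equations. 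The factor $\tfrac{1}{1-x}$ is the sum over all lengths of a decreasing leading block, each weighted $q^1$: in cases (i)--(iii) such a block, once long enough, already contains the decreasing $\tau^{(1)}$, leaving only $\theta^{<2>}$ for the tail, while the polynomial $x+x^2+\cdots+x^{m-1}$ in (iii) collects the shorter decreasing leading blocks that do not yet contain $\tau^{(1)}=m(m-1)\cdots1$; in case (iv), where $\tau^{(1)}$ has an ascent, a decreasing leading block consumes none of $\tau$, which is why $H_\tau$ itself (rather than $H_{\theta^{<2>}}$) multiplies $\tfrac{xq}{1-x}$ there.

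The step I expect to be the main obstacle is this pattern-containment bookkeeping: verifying through all five clauses of the bar-operator definition that $\theta^{(i)}=\overline{\tau^{(1)}\oplus\cdots\oplus\tau^{(i)}}$ is indeed the correct pattern for the middle block to avoid, checking that the maximal-prefix split index is well defined, and confirming that the hypothesis imposed on $\tau^{(1)}$ prevents an occurrence of $\tau$ from being absorbed by a chain of nested leading skew blocks, which would destroy the one-step nature of the recursion. These are essentially the points handled for $q=1$ in \cite[Corollary~5.6]{Egge} and, in a closely related setting, in \cite{EggMan}; once they are in place, the passage to the bivariate statement is mechanical, each block weight $x^{|\cdot|}$ simply acquiring the factor $q^{\Lis(\cdot)}$, with the only genuinely new computation being the propagation of $\Lis(1\ominus\rho\ominus1)=\max\{1,\Lis(\rho)\}$ through the two sub-cases ``$\rho$ decreasing'' and ``$\rho$ not decreasing''. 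I would therefore organize the write-up as: (1) the two $\Lis$ identities together with the decreasing-middle dichotomy; (2) the bar-operator accommodation lemma for skew blocks; (3) the four case-by-case generating-function computations, each a short manipulation resting on (1) and (2).
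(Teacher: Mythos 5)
Your proposal matches the paper's own argument: the paper likewise partitions $I_n(3412,\tau)$ by the first sum-indecomposable block supplied by Proposition~\ref{prop:permmap} (empty involution, leading decreasing block contributing the $\frac{xq}{1-x}$-type terms, or skew block $1\ominus\sigma''\ominus1$ with $\sigma''$ containing $12$), uses additivity of $\Lis$ under $\oplus$ together with $\Lis(1\ominus\sigma''\ominus1)=\max\{1,\Lis(\sigma'')\}$, and settles the containment bookkeeping through Egge's bar-operator machinery, which produces exactly your telescoping terms $x^2\big(H_{\theta^{(i)}/12}-H_{\theta^{(i-1)}/12}\big)H_{\theta^{<i>}}$. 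The details you defer to \cite{Egge} are deferred there as well, and your reading of the paper's $H_\rho$ as $H_\tau$ is the intended one, so I see no substantive gap.
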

We will only prove parts (i) and (iv) of the proposition. The proofs of the other two cases are very similar, and therefore are omitted.
\begin{proof}[Proof of Proposition~\ref{mth}-(i)]
Assume first that $\tau^{(1)}=1$. We partition the set $I_n(3412,\tau)$ into three non-overlapping subsets:
\begin{enumerate}[nosep,label=(\roman*)]
\item $\calj_{n,1}$ - the set of the empty involution;
\item $\calj_{n,2}$ - the set of those involutions of the form  $r(r-1)\cdots1\oplus\sigma'$ for some $r\geq 2;$
\item $\calj_{n,3}$ - the set of those involutions which do not begin with a decreasing sequence.
\end{enumerate}
It is easy to see that the involutions in the sets $\calj_{n,1}$ and $\calj_{n,2}$ contribute $1$ and $\frac{xq}{1-x}H_\tau(x,y),$ respectively, to $H_\tau(x,y).$ To obtain the contribution of the involutions in the set $\calj_{n,3},$ we first observe that in view of Proposition \ref{prop:permmap}, all involutions in $\calj_{n,3}$ can be written in the form $\sigma=(1\ominus\sigma''\ominus1)\oplus\sigma'$ with $\sigma''$ that contains $12$. Thus, the involutions in $\calj_{n,3}$ that avoid $\tau^{(1)}$ contribute $x^2H_{\theta^{(1)}\slash 12}(x,q)H_{\tau}(x,q)=0.$ Furthermore, any involution in $\calj_{n,3}$ that contains $\tau^{(1)},$ avoids $\theta^{(i)}$ and contains $\theta^{(i-1)}$ for some $i=2,3,\ldots,s.$ The total contribution of such involutions into $H_\tau(x,q)$ is equl to
\beq
x^2\sum_{i=2}^s \left(H_{\theta^{(i)}\slash 12}(x,q) -H_{\theta^{(i-1)}\slash 12}(x,q)\right) H_{\theta^{<i>}}(x,q).
\feq
Adding together the contributions of $\calj_{n,1},$ $\calj_{n,2},$ and $\calj_{n,3},$ we obtain the desired result.
\end{proof}
\begin{proof}[Proof of Proposition~\ref{mth}-(iv)]
Suppose now that $\tau^{(1)}\neq m(m-1)\cdots1$ and $\tau^{(1)}\in S_m$ with $m\geq3.$
We will consider again the partition $I_n(3412,\tau)=\bigcup_{k=1}^3 \calj_{n,k}$
defined in the course of the proof of part (i) of the proposition. It is easy to verify that in this case,  $\calj_{n,1}$ contributes $1$ to $H_\tau(x,q),$ while permutations in the set $\calj_{n,2}$ contribute $\frac{xq}{1-x}H_\tau(x,y).$ To obtain the contribution of $\calj_{n,3},$ recall that by Proposition~\ref{prop:permmap}, all involutions in this set have the form $\sigma=(1\ominus\sigma''\ominus1)\oplus\sigma'$ where $\sigma''$ contains $12.$ Thus, the involutions in $\calj_{n,3}$ that avoid $\tau^{(1)}$ contribute $x^2H_{\theta^{(1)}\slash 12}(x,q)H_{\tau}(x,q),$  while the involutions in $\calj_{n,3}$ that contain $\tau^{(1)}$ contribute
\beq
x^2\sum_{i=1}^s \left(H_{\theta^{(i)}\slash 12}(x,q) -H_{\theta^{(i-1)}\slash 12}(x,q)\right) H_{\theta^{<i>}}(x,q).
\feq
Adding up all the contributing terms listed above, yields the desired result.
\end{proof}
The rest of this section is divided into fives subsections, each one is concerned with $\I(3412,\tau)$ for a particular type of pattern $\tau$ and presents the proof of the corresponding part in Theorem~\ref{mn-thm1}.
\subsection{$E_{n,T}(\Lis_n)$ on $\I(3412, \tau)$ with $\tau\in S_2$}
Note that the only involution in $\I(3412,12)$ is $n(n-1)\cdots1.$ Thus,
\beqn
\label{a}
H_{12}(x,q)=1+\frac{xq}{1-x}.
\feqn
Similarly, the only involution in $\I(3412,21)$ is $12\cdots n.$ . Thus,
\beq
H_{21}(x,q)=\frac{1}{1-xq}.
\feq
\subsection{$E_{n,T}(\Lis_n)$ on $\I(3412, \tau)$ with $\tau\in S_3$}
\begin{proof}[Proof of Theorem~\ref{mn-thm1}-(ii)]
An application of Proposition~\ref{mth}-(i) with $\tau=1\oplus1\oplus1=123$ gives
\begin{align*}
H_{123}(x,q)&=1+\frac{xq}{1-x}H_{12}(x,q)+x^2(H_{12\slash 12}(x,q)-H_{1\slash 12}(x,q))H_{12}(x,q)\\
&+x^2(H_{123\slash 12}(x,q)-H_{12\slash 12}(x,q))H_1(x,q).
\end{align*}
It follows from \eqref{a} and the decomposition
\beqn
\label{fact}
H_{123\slash 12}(x,q)=H_{123}(x,q)-H_{12}(x,q)
\feqn
that
\begin{align*}
H_{123}(x,q)&=1+\frac{xq}{1-x}\left(1+\frac{xq}{1-x}\right)+x^2H_{123}(x,q)-x^2\left(1+\frac{xq}{1-x}\right).
\end{align*}
Therefore,
\beq
H_{123}(x,q)=1+\frac{xq(1-x(1-q)-x^2+x^3)}{(1-x)^3(1+x)}.
\feq
Hence, for $T=\{3412, 123\}$ we have:
\beq
E_{n,T}(\Lis_n)=\frac{[x^n]\frac{\partial}{\partial q}H_{123}(x,q)\mid_{q=1}}{[x^n]H_{123}(x,1)}
=\frac{n^2/2+3/4+(-1)^n/4}{n^2/4+7/8+(-1)^n/8}\sim2.
\feq
\end{proof}

\begin{proof}[Proof of Theorem~\ref{mn-thm1}-(iii)]
Proposition~\ref{mth}-(i) implies that for $\tau=1\oplus21=132,$
\beq
H_{132}(x,q)=1+\frac{xq}{1-x}H_{21}(x,q)+x^2\big(H_{132\slash 12}(x,q)-H_{1\slash 12}(x,q)\big)H_{21}(x,q).
\feq
Using \eqref{fact} and the fact that $H_{1\slash 12}(x,q)=0,$ we get
\beq
H_{132}(x,q)=\frac{1-x^2(1-q)}{1-xq-x^2}.
\feq
Therefore, for $T=\{3412, 132\}$ we have:
\beq
E_{n,T}(\Lis_n)\sim\frac{n}{\sqrt{5}}.
\feq
We next apply Proposition~\ref{mth}-(ii) to $\tau=21\oplus1=213,$ to get
\beq
H_{213}(x,q)=1+xqH_{213}(x,q)+\frac{x^2q}{1-x}+x^2\big(H_{213}(x,q)-H_{12}(x,q)\big)H_1(x,q).
\feq
It follows then from \eqref{a} that
\beq
H_{213}(x,q)=\frac{1-x^2(1-q)}{1-xq-x^2}.
\feq
Hence, for $T=\{3412,213\}$ we have: $E_{n,T}(\Lis_n)\sim\frac{n}{\sqrt{5}}.$
\end{proof}

\begin{proof}[Proof of Theorem~\ref{mn-thm1}-(iv)]
An application of Proposition~\ref{mth}-(iii) to $\tau=321$ yields:
\beq
H_{321}(x,q)&=1+(x+x^2)qH_{321}(x,q)+x^2(H_{1}(x,q)-H_{1,12}(x,q))H_{321}(x,q).
\feq
Since $H_1(x,q)=H_{1,12}(x,q)=1,$ this implies that
\beq
H_{321}(x,q)=\frac{1}{1-qx-qx^2}.
\feq
Thus, for $T=\{3412, 321\}$ we have $E_{n,T}(\Lis_n)\sim\frac{3+\sqrt{5}}{5+\sqrt{5}}n.$
\end{proof}

\subsection{$E_{n,T}(\Lis_n)$ on $\I(3412, \tau)$ with $\tau=12\cdots k$}
\begin{proof}[Proof of Theorem~\ref{mn-thm1}-(v)]
Let $F_k(x,q):=H_{12\cdots k}(x,q)$. Applying Proposition~\ref{mth} to the permutation $\tau=12\cdots k$ with $k\geq1$, we obtain:
\beq
F_k(x,q)=1+\frac{xq}{1-x}F_{k-1}(x,q)+x^2\sum_{i=3}^k(F_i(x,q)-F_{i-1}(x,q))F_{k-i+1}(x,q).
\feq
Let $F(x,q;y):=\sum_{k\geq1}F_k(x,q)y^k$. Multiplying both sides of the above recurrence equation by $y^k,$ summing over $k\geq1,$ and using the fact that $F_0(x,q)=0$ and $F_1(x,q)=1$, we obtain:
\begin{align*}
F(x,q;y)&=\frac{y}{1-y}+\frac{xqy}{1-x}F(x,q;y)
+\frac{x^2}{y}F(x,q;y)F(x,q;y)-x^2yH_{12}(x,q)F(x,q;y)\\
&-x^2F(x,q;y)-x^2F(x,q;y)F(x,q;y)+x^2yF(x,q;y).
\end{align*}
Taking \eqref{a} into account and solving for $F(x,y;q),$ we obtain:
\begin{align*}
F(x,q;y)&=\frac{y}{1-y}+\frac{(1-qyx-(1+qy)x^2-\sqrt{(1-qyx-(qy+1)x^2)^2-4q(1+x)x^3y})y}{2x^2(1-y)}\\
&=\frac{y}{1-y}+\frac{qxy^2}{(1-y)(1-x-qyx)}C\left(\frac{qx^3y}{(1+x)(1-x-qyx)^2}\right),
\end{align*}
where $C(x)=\frac{1-\sqrt{1-4x}}{2x}$ is the generating function for the Catalan numbers $c_n=\frac{1}{n+1}\binom{2n}{n}$.
\par
Substituting a series representation of the generating function from A001263 in \cite{Slo}, we obtain that
\beq
F(x,q;y)=\frac{y}{1-y}+\frac{1}{1-y}\sum_{j\geq0}\sum_{i=1}^{j+1}
\frac{\frac{1}{i}\binom{j-1}{i-1}\binom{j}{i-1}x^{2i+j-1}}{(1-x)^{2j+1}(1+x)^j}q^{j+1}y^{j+2}.
\feq
Therefore, for $k\geq2$ we have:
\beqn
\label{eqyFk}
[y^k]F(x,q;y)=1+\sum_{j=0}^{k-2}\sum_{i=1}^{j+1}
\frac{\frac{1}{i}\binom{j-1}{i-1}\binom{j}{i-1}x^{2i+j-1}}{(1-x)^{2j+1}(1+x)^j}q^{j+1}.
\feqn
Hence, for all $k\geq 2,$ using the usual bracket notation for coefficient extraction,
\beq
[x^ny^k]F(x,1;y)\sim\frac{1}{(k-1)2^{k-2}(2k-4)!}\binom{2k-4}{k-2}n^{2k-4}
\feq
and
\beq
[x^ny^k]\frac{\partial}{\partial q}F(x,q;y)\,\Big|_{q=1}\sim\frac{1}{2^{k-2}(2k-4)!}\binom{2k-4}{k-2}n^{2k-4},
\feq
which yields the result in Theorem~\ref{mn-thm1}-(v).
\end{proof}
\subsection{$E_{n,T}(\Lis_n)$ on $\I(3412, \tau)$ with $\tau=k12\cdots(k-1)$}
\begin{proof}[Proof of Theorem~\ref{mn-thm1}-(vi)]
Let $G_k(x,q):=H_{k12\cdots(k-1)}(x,q)$. Applying Proposition~\ref{mth}-(iv) to $\tau=k12\cdots(k-1)$ with $k\geq3,$ we obtain
that
\beq
G_k(x,q)=1+\frac{xq}{1-x}G_k(x,q)+x^2(F_{k-1}(x,q)-F_2(x,q))G_k(x,q),
\feq
which in view of \eqref{a} leads to
$$G_k(x,q)=\frac{1}{1-\frac{xq}{1-x}-x^2(F_{k-1}(x,q)-1-\frac{xq}{1-x})}.$$
Taking \eqref{eqyFk} into account, we arrive to the following result:
\begin{lemma}
For $k\geq 3,$
\beq
H_{k12\cdots(k-1)}(x,q)=\frac{1}{1-\frac{xq}{1-x}-x^2\sum_{j=1}^{k-3}\sum_{i=1}^{j+1}
\frac{\frac{1}{i}\binom{j-1}{i-1}\binom{j}{i-1}x^{2i+j-1}}{(1-x)^{2j+1}(1+x)^j}q^{j+1}}.
\feq
\end{lemma}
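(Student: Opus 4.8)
The plan is to treat the Lemma as a direct specialization of the material assembled just above its statement. We have already reduced $G_k(x,q):=H_{k12\cdots(k-1)}(x,q)$ to the identity
\[
G_k(x,q)=\frac{1}{1-\frac{xq}{1-x}-x^2\bigl(F_{k-1}(x,q)-1-\frac{xq}{1-x}\bigr)},
\]
which came out of Proposition~\ref{mth}-(iv): the pattern $\tau=k12\cdots(k-1)$ begins with its largest entry, so it is irreducible, giving $s=1$ and $\tau^{(1)}=\tau$; it is not the decreasing permutation once $k\ge3$; writing $\tau=1\ominus(12\cdots(k-1))$, where $12\cdots(k-1)$ does not end in $1$ for $k\ge 3$, case~(3) of the bar operator gives $\theta^{(1)}=\overline{\tau}=12\cdots(k-1)$ and $\theta^{<1>}=\tau$; and $H_{\theta^{(1)}\slash 12}(x,q)=F_{k-1}(x,q)-H_{12}(x,q)$, since an involution avoiding $12$ is the decreasing one and hence automatically avoids $12\cdots(k-1)$, while $H_{12}(x,q)=1+\frac{xq}{1-x}$ by \eqref{a}. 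All that remains is to make the dependence on $F_{k-1}$ explicit.

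To do this I would substitute the closed form of $F_{k-1}(x,q)=[y^{k-1}]F(x,q;y)$ supplied by \eqref{eqyFk},
\[
F_{k-1}(x,q)=1+\sum_{j=0}^{k-3}\sum_{i=1}^{j+1}\frac{\frac{1}{i}\binom{j-1}{i-1}\binom{j}{i-1}x^{2i+j-1}}{(1-x)^{2j+1}(1+x)^{j}}\,q^{j+1},
\]
and then observe that its $j=0$ term telescopes against the correction $-1-\frac{xq}{1-x}$ appearing in the denominator of $G_k$. Indeed, for $j=0$ the inner sum has only the term $i=1$, and with the standard conventions $\binom{-1}{0}=\binom{0}{0}=1$ it equals $\frac{x^{2\cdot 1+0-1}}{(1-x)^{1}(1+x)^{0}}\,q=\frac{xq}{1-x}$. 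Hence
\[
F_{k-1}(x,q)-1-\frac{xq}{1-x}=\sum_{j=1}^{k-3}\sum_{i=1}^{j+1}\frac{\frac{1}{i}\binom{j-1}{i-1}\binom{j}{i-1}x^{2i+j-1}}{(1-x)^{2j+1}(1+x)^{j}}\,q^{j+1},
\]
and substituting this into the displayed formula for $G_k(x,q)$ produces exactly the asserted expression for $H_{k12\cdots(k-1)}(x,q)$. As consistency checks one can note that $k=3$ gives an empty outer sum and $H_{312}(x,q)=\frac{1-x}{1-x-xq}$ (which reproduces the known value $E_{n,T}(\Lis_n)=\frac{n+1}{2}$ for $T=\{3412,312\}$), while $k=4$ recovers the entry $\frac{1}{1-\frac{xq}{1-x}-\frac{x^4q^2}{(1-x)^3(1+x)}}$ of Table~\ref{tab:summary}.

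I do not expect any genuine difficulty: the Lemma is pure bookkeeping layered on the preceding derivation, and the only points that require a little care are the verification that $\tau=k12\cdots(k-1)$ satisfies the hypotheses of Proposition~\ref{mth}-(iv) together with the correct computation $\overline{\tau}=12\cdots(k-1)$ of the bar operator, and the extraction of the $j=0$ term of \eqref{eqyFk}, which hinges on the convention for binomial coefficients with a negative upper index. Once these are in place, the remainder is a one-line substitution.
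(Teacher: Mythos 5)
Your proposal is correct and follows essentially the same route as the paper: it applies Proposition~\ref{mth}-(iv) to $\tau=k12\cdots(k-1)$ to get $G_k(x,q)=\bigl(1-\frac{xq}{1-x}-x^2(F_{k-1}(x,q)-1-\frac{xq}{1-x})\bigr)^{-1}$ and then substitutes \eqref{eqyFk}, with the $j=0$ term cancelling the $-1-\frac{xq}{1-x}$ correction. Your extra details (verifying the hypotheses of the proposition, computing $\overline{\tau}=12\cdots(k-1)$, and the $k=3,4$ sanity checks) simply make explicit the bookkeeping the paper leaves implicit.
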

For example, $H_{4123}(x,q)=\frac{1}{1-xq/(1-x)-x^4q^2/((1-x)^3(1+x))}$. Let $\alpha$ be the root of smallest absolute value of the polynomial $3x^4-3x^3-x^2+3x-1$. Thus $\alpha\approx0.45208778430$, and for $T=\{3412,4123\}$ we have:
\begin{align*}
E_{n,T}(\Lis_n)&=\frac{[x^n]\frac{\partial}{\partial q}H_{4123}(x,q)\mid_{q=1}}{[x^n]H_{4123}(x,1)}\\
&\sim\frac{1}{457}(198\alpha^3-246\alpha^2-131\alpha+299)n\approx0.454689799955\cdots n.
\end{align*}
This completes the proof of Theorem~\ref{mn-thm1}-(vi).
\end{proof}
\subsection{$E_{n,T}(\Lis_n)$ on $\I(3412, \tau)$ with $\tau=k(k-1)\cdots1$}
\begin{proof}[Proof of Theorem~\ref{mn-thm1}-(vii)]
Let $F_k(x,q):=H_{k(k-1)\cdots1}(x,q)$. Applying Proposition~\ref{mth} to the permutation $\tau=k(k-1)\cdots1$ with $k\geq3,$ we see that
\begin{align*}
F_k(x,q)&=1+(x+x^2+\cdots+x^{k-1})qF_k(x,q)\\
&+x^2(F_{k-2}(x,q)-1-(x+x^2+\cdots+x^{k-3})q)F_k(x,q).
\end{align*}
Thus,
\beqn
\label{d}
F_k(x,q)&=\frac{1}{1-qx-(q-1)x^2-x^2F_{k-2}(x,q)}
\feqn
with $F_1(x,q)=1$ and $F_2(x,q)=\frac{1}{1-qx}.$ Iterating this equation, one can obtain an expression for $F_k(x,q)$ in the form of
finite continued fractions. Alternatively, $F_k(x,q)$ can be expressed in terms of Chebyshev polynomials.
\par
Recall that Chebyshev polynomials of the second kind can be defined as the solution to the recursion
\beq
U_n(t)=2tU_{n-1}(t)-U_{n-2}(t)
\feq
with initial conditions $U_0(t)=1$ and $U_1(t)=2t.$ Using this recursion and induction, one can derive the following result from \eqref{d}.
\begin{lemma}
\label{lem}
For all $k\geq1,$
\beq
H_{(2k+1)(2k)\cdots1}(x,q)=\frac{U_{k-1}\left(\frac{1-qx-(q-1)x^2}{2x}\right)
-xU_{k-2}\left(\frac{1-qx-(q-1)x^2}{2x}\right)}
{x\left(U_{k}\left(\frac{1-qx-(q-1)x^2}{2x}\right)-xU_{k-1}\left(\frac{1-qx-(q-1)x^2}{2x}\right)\right)}
\feq
and
\beq
H_{(2k+2)(2k+1)\cdots1}(x,q)=\frac{\frac{1-xq}{x}U_{k-1}\left(\frac{1-qx-(q-1)x^2}{2x}\right)
-U_{k-2}\left(\frac{1-qx-(q-1)x^2}{2x}\right)}
{x\left(\frac{1-xq}{x}U_{k}\left(\frac{1-qx-(q-1)x^2}{2x}\right)-U_{k-1}\left(\frac{1-qx-(q-1)x^2}{2x}\right)\right)}.
\feq
\end{lemma}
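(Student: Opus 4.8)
The plan is to prove both displayed identities by induction on $k$, with \eqref{d} as the sole input. Set $z:=\frac{1-qx-(q-1)x^2}{2x}$, so that \eqref{d} reads $F_k(x,q)=\frac{1}{2xz-x^2F_{k-2}(x,q)}$. Because this recursion links $F_k$ only to $F_{k-2}$, the odd-indexed family $(F_{2k+1})_{k\ge0}$ and the even-indexed family $(F_{2k+2})_{k\ge0}$ evolve independently, each by a first-order linear-fractional recursion; this is precisely why the Lemma splits into two cases, with the seeds $F_1(x,q)=1$ and $F_2(x,q)=\frac{1}{1-qx}$.

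Next I would introduce the auxiliary quantities $A_k:=U_k(z)-xU_{k-1}(z)$ and $B_k:=\frac{1-qx}{x}U_k(z)-U_{k-1}(z)$. Since both $n\mapsto U_n(z)$ and $n\mapsto U_{n-1}(z)$ satisfy the Chebyshev recurrence $Y_n=2zY_{n-1}-Y_{n-2}$, so do the fixed linear combinations $A_k$ and $B_k$. The assertion to prove by induction is that $F_{2k+1}(x,q)=\frac{A_{k-1}}{xA_k}$ and $F_{2k+2}(x,q)=\frac{B_{k-1}}{xB_k}$ for all $k\ge0$; since $F_{2k+1}=H_{(2k+1)(2k)\cdots1}$ and $F_{2k+2}=H_{(2k+2)(2k+1)\cdots1}$, these are exactly the two formulas in the statement. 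For the base cases one uses the standard values $U_{-1}(z)=0$ and $U_{-2}(z)=-1$ (forced by $U_0(z)=1$ and the recurrence), which give $\frac{A_{-1}}{xA_0}=\frac{x}{x\cdot1}=1=F_1$ and $\frac{B_{-1}}{xB_0}=\frac{1}{x\cdot\frac{1-qx}{x}}=\frac{1}{1-qx}=F_2$.

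The inductive step is a one-line computation: assuming $F_{2k-1}=\frac{A_{k-2}}{xA_{k-1}}$, substitute into \eqref{d} to get $F_{2k+1}=\bigl(2xz-x^2\cdot\frac{A_{k-2}}{xA_{k-1}}\bigr)^{-1}=\frac{A_{k-1}}{x(2zA_{k-1}-A_{k-2})}=\frac{A_{k-1}}{xA_k}$, where the last step is the $A$-recurrence; the even case is identical with $B$ in place of $A$. Invoking $F_{2k+1}=H_{(2k+1)(2k)\cdots1}$ and $F_{2k+2}=H_{(2k+2)(2k+1)\cdots1}$ then closes the argument. There is no deep obstacle here — the recursion \eqref{d} does the work — and the only points needing care are the index bookkeeping across the even/odd split, the negative-index Chebyshev conventions, and the (routine) observation that although $z$ has a pole at $x=0$, the ratios $A_{k-1}/(xA_k)$ and $B_{k-1}/(xB_k)$ are honest elements of $\mathbb{Q}(q)[[x]]$, so every manipulation is legitimate as a formal power series identity. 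As an alternative presentation, one could iterate \eqref{d} to write $F_{2k+1}$ and $F_{2k+2}$ as finite continued fractions and then quote the classical conversion of such continued fractions into ratios of Chebyshev polynomials of the second kind; the induction above is essentially that conversion specialized to the present initial data.
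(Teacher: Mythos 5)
Your proof is correct and follows essentially the same route as the paper: the paper also derives the two formulas from the recursion \eqref{d} together with the Chebyshev recurrence $U_n(t)=2tU_{n-1}(t)-U_{n-2}(t)$ by induction, with the seeds $F_1=1$, $F_2=\frac{1}{1-qx}$. Your write-up merely makes explicit the even/odd split, the auxiliary combinations $A_k$, $B_k$, and the negative-index conventions that the paper leaves to the reader.
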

We remark that the results in Lemma~\ref{lem} with $q=1$ recover formulas (7) and (8) in \cite{Egge} for ordinary generating functions
for the number of involutions avoiding $3412$ and $k(k-1)\cdots1.$
\par
An application of the lemma with $k=1$ yields for $T=\{3412,4321\}:$
\beq
E_{n,T}(\Lis_n)&=\frac{[x^n]\frac{\partial}{\partial q}H_{4321}(x,q)\mid_{q=1}}{[x^n]H_{4321}(x,1)}
\sim\frac{5}{8}n,
\feq
which completes the proof of Theorem~\ref{mn-thm1}-(vii).
\end{proof}

\section*{Acknowledgement}
G. Y\i ld\i r\i m was partially supported by Tubitak-Bideb 2232 Grant no: 118C029.


\end{document}